\newcommand{\N}{{\mathbb{N}}}
\newcommand{\C}{{\mathbb{C}}}
\newcommand{\F}{{\mathbb{F}}}
\newcommand{\T}{{\mathbb{T}}}
\newcommand{\A}{{\mathbb{A}}}
\newcommand{\B}{{\mathbb{B}}}
\newcommand{\FF}{{\mathfrak{F}}}
\newcommand{\im}{{\rm im}}
\newcommand{\chr}{{\rm char}}
\newcommand{\Stab}{{\rm Stab}}
\newcommand\Irr{{\rm Irr}}
\newcommand{\SL}{{\rm SL}}
\newcommand{\ol}{\overline}
\newcommand{\dfour}[4]{\begin{footnotesize}\ensuremath{\hspace{-0.1cm}\begin{array}{ccc}&#2&\\#1\hspace{-0.2cm}&#3&\hspace{-0.2cm}#4\end{array}\hspace{-0.1cm}}\end{footnotesize}}
\newtheorem{theorem}{Theorem}[section]
\newtheorem{lemma}[theorem]{Lemma}
\newtheorem{proposition}[theorem]{Proposition}
\newtheorem{corollary}[theorem]{Corollary}
\newtheorem{definition}[theorem]{Definition}
\theoremstyle{remark}
\newtheorem{remark}[theorem]{Remark}
\begin{document}

\title[Characters of Sylow $p$-subgroups of ${}^3D_4(q^3)$]{Irreducible characters of Sylow $p$-subgroups of the Steinberg triality groups ${}^3D_4(p^{3m})$}

\author{Tung Le}

\address{Mathematics Department, North-West University, Mafikeng, South Africa}

\email{lttung96@yahoo.com}

\date{\today}

\keywords{Sylow subgroup, root system, irreducible character, Steinberg  triality}

\subjclass[2010]{Primary 20C15, 20D20. Secondary 20C33,  20D15}

%% 20C15   	Ordinary representations and characters
%% 20D15   	Nilpotent groups, $p$-groups
%% 20D20   	Sylow subgroups, Sylow properties, $\pi$-groups, $\pi$-structure
%% 12E20   	Finite fields (field-theoretic aspects)
%% 20C33   	Representations of finite groups of Lie type

%%%%%%%%%%%%%%%%%%%%%%%%%%%%%%%%%%%%%%%%%%%%%%%%%%%%%%%%%%%%%%%%%%%%%%%%%%%%%%%%%%%%%%%%%%
\dedicatory{Dedicated to Professor Geoffrey Robinson on his 60th birthday}

\begin{abstract}

Here we construct and count all ordinary irreducible characters of Sylow $p$-subgroups of the Steinberg triality groups ${}^3D_4(p^{3m})$.

\end{abstract}

\maketitle
%%%%%%%%%%%%%%%%%%%%%%%%%%%%%%%%%%%%%%%%%%%%%%%%%%%%%%%%%%%%%%%%%%%%%%%%%%%%%%%%%%%%%

%%%%%%%%%%%%%%%%%%%%%%%%%%%   Introduction   %%%%%%%%%%%%%%%%%%%%%%%%%%%%%%%%%%%%%%%

%%%%%%%%%%%%%%%%%%%%%%%%%%%%%%%%%%%%%%%%%%%%%%%%%%%%%%%%%%%%%%%%%%%%%%%%%%%%%%%%%%%%%

\section{Introduction}
Let $\F_q$ be a finite field of order $q$ where $q$ is a power of some prime $p$.
A long standing conjecture by Higman \cite{Hig} on the number $k(U_n(q))$ of conjugacy classes of the unitriangular group $U_n(q)$ of degree $n$ over $\F_q$ is that $k(U_n(q))$ is a polynomial in $q$ with integral coefficients. The unitriangular group $U_n(q)$ is also known as a maximal unipotent subgroup of the special linear group $\SL_n(q)$. A generalization of Higman's Conjecture on the maximal unipotent subgroups $U(q)$ of other finite groups $G(q)$ of Lie type is that $k(U(q))$ is a polynomial in $q$ with integral coefficients. Here, we answer this for the Steinberg triality groups ${}^3D_4(q^3)$. Denote by $U$ a Sylow $p$-subgroup of the Steinberg triality group ${}^3D_4(q^3).$ Let $\F_{-}^\times:=\F_{-}-\{0\}$. We prove the following result.

\begin{theorem}
\label{mainthm}
The irreducible characters of $U$ are classified into five families as listed in Table \ref{tab:irrU}.
\end{theorem}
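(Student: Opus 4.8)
The plan is to combine the explicit root-subgroup structure of ${}^3D_4(q^3)$ with Clifford theory along a normal series of $U$. Recall that $U$ is generated by the six root subgroups attached to the positive roots of its relative root system, which is of type $G_2$; writing $a$ and $b$ for the short and the long simple root respectively, the three short roots $a,\ a+b,\ 2a+b$ carry root subgroups isomorphic to $(\F_{q^{3}},+)$, the three long roots $b,\ 3a+b,\ 3a+2b$ carry root subgroups isomorphic to $(\F_{q},+)$, and hence $|U| = (q^{3})^{3}q^{3} = q^{12}$. The first step is to read off, from the twisted Chevalley commutator relations --- in which the triality enters through the Frobenius $t\mapsto t^{q}$ on the structure constants --- the lower central series, the centre $Z(U)$, and the derived subgroup $[U,U]=X_{a+b}X_{2a+b}X_{3a+b}X_{3a+2b}$ of $U$. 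Since $U/[U,U]\cong(\F_{q^{3}},+)\times(\F_{q},+)$, this already produces the family of $q^{4}$ linear characters; every remaining $\chi\in\Irr(U)$ is non-linear with $\chi(1)$ a power of $q$, and, $U$ being a $p$-group, each such $\chi$ is monomial.

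The engine for the non-linear characters is the following Clifford reduction. One works with the abelian normal subgroup $A=X_{2a+b}X_{3a+b}X_{3a+2b}$ of order $q^{5}$ --- abelian and normal because the sum of any two of $2a+b,\,3a+b,\,3a+2b$, and the sum of any one of them with an arbitrary positive root, either fails to be a root or lies again in $\{2a+b,\,3a+b,\,3a+2b\}$ --- so that $U/A$ is the two-step nilpotent group generated by the images of $X_{a}$ and $X_{b}$ with $[X_{a},X_{b}]\equiv X_{a+b}$ central. For $\chi\in\Irr(U)$ the restriction $\chi|_{A}$ is a sum of $U$-conjugates of a linear character $\mu\in\Irr(A)$; setting $T=\Stab_{U}(\mu)$ one has $\chi=\mathrm{Ind}_{T}^{U}\psi$ for some $\psi\in\Irr(T\mid\mu)$ with $\chi(1)=[U:T]\,\psi(1)$. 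One then stratifies $\Irr(A)$ according to the restriction of $\mu$ to the three factors: (i) $\mu$ trivial, which reduces to $\Irr(U/A)$ --- whose characters are the linear ones together with a family of degree $q$, one block for each non-trivial character of the central factor $X_{a+b}$, the degree $q$ being forced by the rank of the induced $\F_{q}$-bilinear pairing $\F_{q^{3}}\times\F_{q}\to\F_{q^{3}}$; and (ii) $\mu$ non-trivial, which breaks up further according to which of the factors $X_{3a+2b},X_{3a+b},X_{2a+b}$ the character $\mu$ is non-trivial on. In each of the resulting strata one computes $T$ directly from the commutator relations, checks that $\mu$ extends to $T$ (so that $\Irr(T\mid\mu)$ consists of the products of a fixed extension $\tilde\mu$ with the inflations of $\Irr(T/A)$), and reads off the induced degrees $[U:T]\,\psi(1)$, which come out as $q,\,q^{2},\,q^{3}$. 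Grouping the outcomes by construction type gives precisely the five families of Table~\ref{tab:irrU}, their parameters running over $\F_{q}$, $\F_{q^{3}}$, $\F_{q}^{\times}$ and $\F_{q^{3}}^{\times}$.

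It then remains to check internal consistency: the five families are pairwise disjoint because members of different families differ in their central character or in their restriction to a suitable $X_{\alpha}$, and they exhaust $\Irr(U)$ either directly from the Clifford analysis or by confirming $\sum_{\chi\in\Irr(U)}\chi(1)^{2}=q^{12}$ from the counts in the table. I expect the technical heart --- and the main obstacle --- to be the non-trivial strata in (ii). Because short and long root subgroups have unequal orders, the pertinent commutator pairings are $\F_{q}$-bilinear maps $\F_{q^{3}}\times\F_{q}\to\F_{q^{3}}$ and $\F_{q^{3}}\times\F_{q^{3}}\to\F_{q}$ twisted by $t\mapsto t^{q}$, and one has to pin down their radicals and ranks uniformly in $q$, since these govern the inertia subgroups $T$ and hence the character degrees; one also has to verify, in each stratum, that the relevant Schur-multiplier obstruction vanishes so that $\mu$ really does extend to $T$, and to classify $\Irr(T\mid\mu)$ explicitly. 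Carrying this through uniformly for all primes $p$, including $p=2$ and $p=3$, is where the bulk of the work lies.
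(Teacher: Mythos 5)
Your overall strategy---Clifford theory applied to the abelian normal subgroup $A=Y_4Y_5Y_6$ of order $q^5$, with $\Irr(U)$ stratified by the behaviour of $\mu=\chi|_A$ on the three factors---is essentially the same filtration the paper uses (there phrased via the upper central series and kernels), and your setup of the root data, the order $|U|=q^{12}$, and the $q^4$ linear characters are all correct. The difficulty is that the part you defer as ``the technical heart'' is precisely where the theorem lives, and the conclusions you do state about it are wrong. You assert that every non-linear $\chi$ has $\chi(1)$ a power of $q$ and that the induced degrees ``come out as $q,\,q^2,\,q^3$.'' Neither is compatible with Table~\ref{tab:irrU}: the almost faithful characters (those non-trivial on $Z(U)=Y_6$) have degree $q^4$; there are no characters of degree $q^2$; and for $q$ even there is a family of $4(q^3-1)(q-1)$ characters of degree $q^3/2$, which is a power of $p$ but not of $q$. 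A check of $\sum_\chi\chi(1)^2=q^{12}$ against your degree list would therefore fail, so the ``reading off'' of inertia groups has not actually been carried out.

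Concretely, two computations are missing and cannot be waved through. First, in the stratum where $\mu$ is non-trivial on $Y_6$, one needs that the pairing $(t,u)\mapsto t\bar u+\bar t\bar{\bar u}+\bar{\bar t}u$ into $\F_q$ is surjective for every $t\neq 0$ (Lemma~\ref{lem:Frob-cubed}); this is what forces the degree $q^4$ and the count $(q-1)q^3$, and the paper still needs a Mackey argument through the subgroup $Y_6Y_5Y_4Y_2$ to conclude irreducibility of the induced character. Second, and more seriously, in the stratum where $\mu$ is trivial on $Y_5Y_6$ but not on $Y_4$, the relevant twisted form is $f_t(u)=t^qu+tu^q$ on $\F_{q^3}$: its radical is $\{0\}$ for $q$ odd but equals $t\F_q$ for $q$ even, the images $\B_t$ are then exactly the $\F_q$-hyperplanes of $\F_{q^3}$ (Lemmas~\ref{lem:odd-even_3D4} and~\ref{lem:char2-3D4}), and a further two-stage stabilizer computation in $Y_2$ and $Y_1$ produces inertia groups of order $2q^7$ in $U/Y_5Y_6$, hence characters of degree $q^3/2$ with the extra parameters $\F_2\times\F_2$. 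This even/odd dichotomy is the entire content of Corollary~\ref{maincoro} and is invisible in your outline, so as written the proposal does not establish the classification of Table~\ref{tab:irrU}.
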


\begin{table}[!ht]
\caption{Irreducible characters of Sylow $p$-subgroups $U$ of ${}^3D_4(q^3)$}
\label{tab:irrU}
\begin{center}
\begin{tabular}{l|l|l|l|l}
\hline
\rule{0cm}{0.4cm}
Family & Notation & Parameter set & Number & Degree
\rule[-0.1cm]{0cm}{0.4cm}\\
\hline
\rule{0cm}{0.5cm}
$\FF_{6}$ & $\chi^{a,b}_{6,q^4}$ & $\F_q^\times \times \F_{q^3}$ & $(q-1)q^3$ & $q^4$
\rule[-0.2cm]{0cm}{0.4cm}\\
\hline
\rule{0cm}{0.4cm}
$\FF_{5}$ & $\chi^{a_1,a_2,b}_{5,q^3}$ & $\F_q^\times\times\F_q\times\F_{q^3}$ &$(q-1)q^4$ & $q^3$
\rule[-0.2cm]{0cm}{0.4cm}\\
\hline
\rule{0cm}{0.4cm}
$\FF_{4}^{odd}$ & $\chi^{b,a}_{4,q^3}$ & $\F_{q^3}^\times\times\F_q$ & $(q^3-1)q$ & $q^3$
\rule[-0.2cm]{0cm}{0.4cm}\\
\hline
\rule{0cm}{0.4cm}
$\FF_{4}^{even}$ & $\chi^b_{4,q^3}$ & $\F_{q^3}^\times$ &$q^3-1$ & $q^3$
\rule[-0.2cm]{0cm}{0.4cm}\\
                        & $\chi^{b,a,c_1,c_2}_{4,\frac{q^3}{2}}$ & $\F_{q^3}^\times\times\F_q^\times\times\F_2\times\F_2$ &$4(q^3-1)(q-1)$ & $q^3/2$\\
\hline
\rule{0cm}{0.4cm}
$\FF_{3}$ & $\chi^{b_1,b_2}_{3,q}$ & $\F_{q^3}^\times\times(\F_{q^3}/\F_q)$ &$(q^3-1)q^2$ & $q$
\rule[-0.2cm]{0cm}{0.4cm}\\
\hline
\rule{0cm}{0.4cm}
$\FF_{lin}$ & $\chi^{b,a}_{lin}$ & $\F_{q^3}\times\F_q$ &$q^4$ & $1$
\rule[-0.2cm]{0cm}{0.4cm}\\
\hline
\end{tabular}
\end{center}
\end{table}

It is well-known that the primes $2$ and $3$ are bad for the Chevalley groups of type $G_2$. Since the Steinberg triality groups ${}^3D_4(q^3)$ also has the Dynkin diagram of type $G_2$, we are curious that if the primes $2$ and $3$ show up as the bad primes of $^3D_4(q^3)$ in terms of the representation theory of the Sylow $p$-subgroup. Theorem \ref{mainthm} points out that only prime $2$ affects on the structure of $U$ and $k(U)$, which is compatible to the global computation on character degrees of $^3D_4(q^3)$, see \cite{Lue}.

\begin{corollary}
\label{maincoro}
If $q$ is odd then $k(U)=2q^5+2q^4-q^3-q^2-q$. Otherwise, if $q$ is even, then $k(U)=2q^5+5q^4-4q^3-q^2-4q+3$.
\end{corollary}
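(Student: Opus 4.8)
The plan is to derive Corollary \ref{maincoro} directly from Theorem \ref{mainthm} by summing the sizes of the five families listed in Table \ref{tab:irrU}. Since $k(U)$ — the number of conjugacy classes of $U$ — equals the number of ordinary irreducible characters of $U$, it suffices to add up the ``Number'' column, being careful about which rows depend on the parity of $q$.

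First I would write down the contribution of each family as a polynomial in $q$. From the table, the families $\FF_6,\FF_5,\FF_3$ and $\FF_{lin}$ contribute $(q-1)q^3$, $(q-1)q^4$, $(q^3-1)q^2$, and $q^4$ respectively, regardless of the parity of $q$. The family $\FF_4$ is where the prime $2$ enters: when $q$ is odd there is only the subfamily $\FF_4^{odd}$ with $(q^3-1)q$ characters of degree $q^3$, whereas when $q$ is even this degenerate row disappears and is replaced by $\FF_4^{even}$, consisting of $q^3-1$ characters of degree $q^3$ together with $4(q^3-1)(q-1)$ characters of degree $q^3/2$. (One should note in passing that the total $\sum_\chi \chi(1)^2 = |U| = q^{12}$ is consistent in both cases, which is a useful sanity check but not logically needed for the count.)

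Then I would simply add. For $q$ odd:
\begin{align*}
k(U) &= (q-1)q^3 + (q-1)q^4 + (q^3-1)q + (q^3-1)q^2 + q^4\\
 &= 2q^5 + 2q^4 - q^3 - q^2 - q.
\end{align*}
For $q$ even, replacing the $\FF_4^{odd}$ term $(q^3-1)q$ by $(q^3-1) + 4(q^3-1)(q-1) = (q^3-1)(4q-3)$ gives
\begin{align*}
k(U) &= (q-1)q^3 + (q-1)q^4 + (q^3-1)(4q-3) + (q^3-1)q^2 + q^4\\
 &= 2q^5 + 5q^4 - 4q^3 - q^2 - 4q + 3.
\end{align*}

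There is essentially no obstacle here: the corollary is a bookkeeping consequence of the classification, so the only thing to be careful about is the case split at $p=2$ and the arithmetic of collecting terms. The one point that genuinely requires Theorem \ref{mainthm} (rather than being formal) is the assertion that Table \ref{tab:irrU} is \emph{exhaustive} and that the listed parameter sets are in bijection with the distinct characters in each family (no overcounting), so that the ``Number'' entries may be summed without correction; this is exactly what the proof of Theorem \ref{mainthm} establishes.
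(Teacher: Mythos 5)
Your proposal is correct and is exactly how the corollary follows in the paper: $k(U)=|\Irr(U)|$, so one sums the ``Number'' column of Table \ref{tab:irrU}, using the $\FF_4^{odd}$ row for odd $q$ and the two $\FF_4^{even}$ rows for even $q$; your arithmetic in both cases reproduces the stated polynomials. No issues.
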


We approach the Sylow $p$-subgroup $U$ of %the Steinberg triality group
$^3D_4(q^3)$ by its root system. The method to construct all irreducible characters of $U$ is quite elementary, mainly using Clifford theory. By studying the action of $\F_{q^3}^\times$ on its $\F_q$-hyperplane set and its $\F_p$-hyperplane set, we obtain the structures of $U$ and the factor groups $U/Z$ where $Z$ is normal in $U$ and generated by some root subgroups. % these has been called pattern subgroups Isaacs' papers.

Here are some explanations for the information in Table \ref{tab:irrU}. There are $5$ families of irreducible characters of $U$ and each row of Table \ref{tab:irrU} represents one of these families. The first column gives notation for these families of characters. Notice that the family $\FF_4^{odd}$ exists only for odd $q$, while $\FF_4^{even}$ exists only if $q$ is even. The index of this notation describes the positive root $j$-th of maximal height such that $Y_j$ is not contained in the kernels of the irreducible characters in the family, where the positive root set of ${}^3D_4$ presented in Subsection \ref{subsec:root}. The family $\FF_{lin}$ contains all linear characters of $U$. The second column of Table \ref{tab:irrU} gives the notation of irreducible characters in each family. The upper indices are the parameters to decide the uniqueness of each member in their family where $a,a_i\in\F_q$, $b,b_i\in\F_{q^3}$ and $c_i\in\F_2$. These parameters take values  from the parameter set in the third column. The lower indices show their family and degree. The fourth column lists the number of distinct irreducible characters in each family and the last column gives their degrees.

In this paper, we first present some notations of character theory, introduce the root system as well as Sylow $p$-subgroups of the Steinberg triality groups ${}^3D_4(q^3)$. Next, we establish some finite field properties which are used latter for the proof of Theorem \ref{mainthm} in Section \ref{sec:proof}.

%%%%%%%%%%%%%%%%%%%%%%%%%%%%%%%%%%%%%%%%%%%%%%%%%%%%%%%%%%%%%%%%%%%%%%%%%%%%%%%%%%%%%

%%%%%%%%%%%%%%%%%%%%%%%%%%%   Basic Setup   %%%%%%%%%%%%%%%%%%%%%%%%%%%%%%%%%%%%%%%

%%%%%%%%%%%%%%%%%%%%%%%%%%%%%%%%%%%%%%%%%%%%%%%%%%%%%%%%%%%%%%%%%%%%%%%%%%%%%%%%%%%%%

\section{Basic Setup and Notations}
\label{sec:setup}
In this section we introduce some fundamental notations of character theory, Sylow $p$-subgroups of the Steinberg triality ${}^3D_4(q^3)$, and some finite field properties.

%%%%%%%%%%%%%%%%%%%%%%%%%%%%%%%%%%%%%%%%%%%%%%%%%%%%%%%%%%%%%%%%%%%%%%%%%%%%%%%%%%%%%

%%%%%%%%%%%%%%%%%%%%%%%%%%%%%%%%% Character Theory %%%%%%%%%%%%%%%%%%%%%%%%%%%

%%%%%%%%%%%%%%%%%%%%%%%%%%%%%%%%%%%%%%%%%%%%%%%%%%%%%%%%%%%%%%%%%%%%%%%%%%%%%%%%%%%%%

\subsection{Character theory}
\label{subsec:group+chars}

Let $G$ be a group. Denote $G^\times:=G-\{1\},$ $\Irr(G)$ the set of all complex irreducible characters of
$G,$ and $\Irr(G)^\times:=\Irr(G)-\{1_G\}.$ If $\chi$ is a character of $G$ and $\lambda$ a character of a subgroup $H$ of $G$, we write
$\lambda^G$ for the character induced by $\lambda$ and $\chi|_H$ for
the restriction of $\chi$ to $H$. We define $\Irr(G,\lambda):=\{\chi\in \Irr(G):(\chi,\lambda^G)>0\}$ the irreducible constituent set of $\lambda^G$. Denote the kernel of $\chi$ by $\ker(\chi):=\{g\in G:\chi(g)=\chi(1)\}$.
Furthermore, for $N\unlhd G,$ let $\Irr(G/N)$ denote the set of all irreducible characters of $G$ with $N$ in their kernels.
%If $G=N\rtimes H,$ then for each character $\xi$ of $H$, we denote the inflation of $\xi$ to $G$ by $\xi_G,$ i.e. $\xi_G$ is the extension of $\xi$ to $G$  with $N\leq \ker(\xi_G).$
 For the others, our notations will be quite standard.

%%%%%%%%%%%%%%%%%%%%%%%%%%%%%%%%%%%%%%%%%%%%%%%%%%%%%%%%%%%%%%%%%%%%%%%%%%%%%%%%%%%%%%

%%%%%%%%%%%%%%%%%%%%%%%%%%%%%   Root system %%%%%%%%%%%%%%%%%%%%%%%%%%%%%%%%%%

%%%%%%%%%%%%%%%%%%%%%%%%%%%%%%%%%%%%%%%%%%%%%%%%%%%%%%%%%%%%%%%%%%%%%%%%%%%%%%%%%%%%%%

\subsection{Root systems of $D_4$ and ${}^3D_4$ through the graph automorphism}

\label{subsec:root}

Let $\alpha_1$, $\alpha_2$, $\alpha_3$, $\alpha_4$ be fundamental roots of the root system $\Phi$ of Lie type $D_4$. Here is the Dynkin diagram of $\Phi$, see Carter \cite[Chapter 3]{cart2}.

\begin{center}
\setlength{\unitlength}{1cm}
\begin{picture}(4,2)
\thinlines
\put(0.5,0.5){\circle*{0.17}}
\put(2,0.5){\circle*{0.17}}
\put(3.5,0.5){\circle*{0.17}}
\put(2,1.7){\circle*{0.17}}
\put(0.5,0.5){\line( 1, 0){1.5}}
\put(2,0.5){\line( 1, 0){1.5}}
\put(2,1.7){\line( 0, -1){1.2}}
\put(0.3,0.1){$\alpha_2$}
\put(1.9,0.1){$\alpha_1$}
\put(3.4,0.1){$\alpha_4$}
\put(2.2,1.7){$\alpha_3$}
\end{picture}
\end{center}

The positive roots are those roots which can be written as linear
combinations of the simple roots $\alpha_1, \alpha_2, \alpha_3,
\alpha_4$ with nonnegative coefficients and we write $\Phi_+$ for the
set of positive roots. We use the notation \dfour1121 for the
root $2\alpha_1 + \alpha_2 + \alpha_3 + \alpha_4$ and we use a
similar notation for the remaining positive roots. The $12$ positive
roots of $\Phi$ are given in Table~\ref{tab:posroots}.

\begin{table}[!ht]
\caption{Positive roots of the root system $\Phi$ of type $D_4$.}
\label{tab:posroots}

\begin{center}
\begin{tabular}{c|llll}
\hline
\rule{0cm}{0.4cm}
Height & Roots &&&
\rule[-0.1cm]{0cm}{0.4cm}\\
\hline% \cline{1-3} \hline
\rule{0cm}{0.5cm}
5 & $\alpha_{12} := $ \dfour1121 &&&
\rule[-0.2cm]{0cm}{0.4cm}\\
\hline
\rule{0cm}{0.4cm}
4 & $\alpha_{11} := $ \dfour1111 &&&
\rule[-0.2cm]{0cm}{0.4cm}\\
\hline
\rule{0cm}{0.4cm}
3 & $\alpha_8 := $ \dfour1110 & $\alpha_9 := $ \dfour0111 & $\alpha_{10} := $ \dfour1011 &
\rule[-0.2cm]{0cm}{0.4cm}\\
\hline
\rule{0cm}{0.4cm}
2 & $\alpha_5 := $ \dfour1010 & $\alpha_6 := $ \dfour0110 & $\alpha_7 := $ \dfour0011 &
\rule[-0.2cm]{0cm}{0.4cm}\\
\hline
\rule{0cm}{0.4cm}
1 & $\alpha_1:=$ \dfour0010  & $\alpha_2:=$ \dfour1000  & $\alpha_3:=$ \dfour0100   & $\alpha_4:=$ \dfour0001
\rule[-0.2cm]{0cm}{0.4cm}\\
\hline
\end{tabular}
\end{center}
\end{table}

Let $\mathfrak{L}$ be a simple complex Lie algebra with root system $\Phi$.
We choose a Chevalley basis $\{h_r | r \in \Delta\} \cup \{e_r | r
\in \Phi\}$ of $\mathfrak{L}$ such that the structure constants $N_{rs}$ in
$[e_r, e_s] = N_{rs} e_{r+s}$
on extraspecial
pairs of roots $(r, s) \in \Phi \times \Phi$ are chosen in such a way that
they are invariant under the triality automorphism $\tau$, see
\cite[Section~4.2]{cart2}, i.e.  $N_{rs} =  N_{\tau(r)\tau(s)}$.
Fix a power $p^f$ and let $D_4(p^f)$ be the Chevalley
group of type $D_4$ over the
field $\F_{p^f}$ constructed from $\mathfrak{L}$, see \cite[Section~4.4]{cart2}.
The group
$D_4(p^f) \cong P\Omega_8^+(p^f)$
is simple and is generated by the root elements $x_\alpha(t)$ for all
$\alpha \in \Phi$ and $t \in \F_{p^f}$. Let
$X_\alpha := \langle x_\alpha(t) \, | \, t \in \F_{p^f} \rangle$ be the
root subgroup corresponding to $\alpha \in \Phi$.
For positive roots, we use the abbreviation
$x_i(t) := x_{\alpha_i}(t)$, $i\in[1,12]$. The
commutators $[x_i(t), x_j(u)] = x_i(t)^{-1} x_j(u)^{-1} x_i(t) x_j(u)$
are given in Table~\ref{tab:commrelD4}. All
$[x_i(t), x_j(u)]$ not listed in this table are equal to~$1$.

\begin{table}[!ht]
\caption{Commutator relations for type $D_4$.}
\label{tab:commrelD4}
%\begin{center}
\begin{tabular}{llllll}
$\left[x_1(t), x_2(u)\right]$ & $=$ & $x_5(tu)$, \, &
$\left[x_1(t), x_3(u)\right]$ & $=$ & $x_6(tu)$, \\
$\left[x_1(t), x_4(u)\right]$ & $=$ & $x_7(tu)$, \, &
$\left[x_1(t), x_{11}(u)\right]$ & $=$ & $x_{12}(tu)$, \\
$\left[x_2(t), x_6(u)\right]$ & $=$ & $x_8(tu)$, \, &
$\left[x_2(t), x_7(u)\right]$ & $=$ & $x_{10}(tu)$,\\
$\left[x_2(t), x_{9}(u)\right]$ & $=$ & $x_{11}(tu)$, \, &
$\left[x_3(t), x_5(u)\right]$ & $=$ & $x_8(tu)$,\\
$\left[x_3(t), x_7(u)\right]$ & $=$ & $x_9(tu)$, \, &
$\left[x_3(t), x_{10}(u)\right]$ & $=$ & $x_{11}(tu)$,\\
$\left[x_4(t), x_5(u)\right]$ & $=$ & $x_{10}(tu)$, \, &
$\left[x_4(t), x_6(u)\right]$ & $=$ & $x_9(tu)$,\\
$\left[x_4(t), x_8(u)\right]$ & $=$ & $x_{11}(tu)$, \, &
$\left[x_5(t), x_9(u)\right]$ & $=$ & $x_{12}(tu)$,\\
$\left[x_6(t), x_{10}(u)\right]$ & $=$ & $x_{12}(tu)$,\, &
$\left[x_7(t), x_8(u)\right]$ & $=$ & $x_{12}(tu)$.
\end{tabular}
%\end{center}
\end{table}

Let $U\!D_4(p^f)$ be the subgroup of $D_4(p^f)$ generated by the
elements $x_i(t)$ for $i\in[1,12]$ and $t \in \F_{p^f}$. So $U\!D_4(p^f)$ is a
maximal unipotent subgroup and a Sylow $p$-subgroup of $D_4(p^f)$.
Recall that the map
$\tau: U\!D_4(p^f) \rightarrow U\!D_4(p^f)$ defined by $\tau(x_i(t)) = x_{i\gamma}(t)$ where $\gamma$ is the permutation
$(2,3,4)(5,6,7)(8,9,10)$ induces an automorphism of $U\!D_4(p^f)$ of order $3$.
The signs in Table \ref{tab:commrelD4} are chosen to satisfy this choice of $\tau$.
Notice that $\tau$  is the restriction of a triality automorphism of the Chevalley group
$D_4(p^f)$ to $U\!D_4(p^f)$, see \cite[Proposition 12.2.3]{cart2}.

Under the action of the permutation $\gamma$ on the positive roots of $\Phi$, there are six orbits of roots as follows.
\begin{itemize}
\item[] $S_1:=\{\alpha_1=$ \dfour0010$\}$,

\item[] $S_2:=\{\alpha_2=$ \dfour1000, $\alpha_3=$ \dfour0100, $\alpha_4=$ \dfour0001$\}$,

\item[] $S_3:=\{\alpha_5=$ \dfour1010, $\alpha_6=$ \dfour0110, $\alpha_7=$ \dfour0011$\}$,

\item[] $S_4:=\{\alpha_8=$ \dfour1110, $\alpha_9=$ \dfour0111, $\alpha_{10}=$ \dfour1011$\}$,

\item[] $S_5:=\{\alpha_{11}=$ \dfour1111$\}$,

\item[] $S_6:=\{\alpha_{12}=$ \dfour1121$\}$.
\end{itemize}

For the construction of ${}^3D_4$, it requires an automorphism $\sigma=\tau\rho$ of $D_4(p^f)$ where $\rho$ is a field automorphism of $\F_{p^f}$ such that $\rho^3=1$, see \cite[Section 13.4]{cart2}. The existence of an order $3$ field automorphism $\rho$ of $\F_{p^f}$ forces $f=3m$ for some $m\in \N$. So from now on we consider the field $\F_{q^3}$. For all $t\in \F_{q^3}$, denote $\bar{t}:=\rho(t)=t^q$ and $\bar{\bar{t}}:=\rho(\bar{t})$.
By \cite[Proposition 13.6.3]{cart2}, the $\sigma$-fixed points of $U\!D_4(q^3)$ corresponding to each root orbit are as follows.
\begin{itemize}
\item[] ${Y_1} :=%X_{S_1}=
\{ {y_1}(t):=x_1(t): t=\bar{t}\in \F_{q^3} \}=\{y_1(t):t\in\F_q\},$%\cong \F_q

\item[]${Y_2} :=%X_{S_2}=
\{ {y_2}(t):=x_2(t)x_3(\bar{t})x_4({\bar{\bar{t}}}): t\in \F_{q^3} \},$%\cong \F_{q^3}

\item[]${Y_3} :=%X_{S_3}=
\{ {y_3}(t):=x_5(t)x_6(\bar{t})x_7({\bar{\bar{t}}}): t\in \F_{q^3} \},$%\cong \F_{q^3}

\item[]${Y_4} :=%X_{S_4}=
\{ {y_4}(t):=x_8(t)x_9(\bar{t})x_{10}({\bar{\bar{t}}}): t\in \F_{q^3} \},$%\cong \F_{q^3}

\item[]${Y_5} :=%X_{S_5}=
\{ {y_5}(t):=x_{11}(t): t=\bar{t} \in \F_{q^3} \}=\{y_5(t):t\in\F_q\},$%\cong \F_q

\item[]${Y_6} :=%X_{S_6}=
\{ {y_6}(t):=x_{12}(t): t=\bar{t} \in \F_{q^3} \}=\{y_6(t):t\in\F_q\}.$%\cong \F_q
\end{itemize}

%\smallskip
Let $\{p_i\}_i$ be Euclidian coordinates of the roots $\{\alpha_i\}_i$. By \cite[Section 13.3.4]{cart2}, we set
$P_1:=p_1$,
$P_2:=\frac{1}{3}(p_2+p_3+p_4)$,
$P_3:=\frac{1}{3}(p_5+p_6+p_7)$,
$P_4:=\frac{1}{3}(p_8+p_9+p_{10})$,
$P_5:=p_{11}$,
$P_6:=p_{12}$.
It is easy to check that $P_3=P_1+P_2$, $P_4=P_1+2P_2$, $P_5=P_1+3P_2$, $P_6=2P_1+3P_2$. So $\{P_i: i\in [1,6]\}$ is the positive root set of ${}^3D_4$ of type $G_2$, where $P_2$ is short and $P_1$ is long.

We call each ${Y_i}$ a \textit{root subgroup} in the $\sigma$-fixed-point subgroup ${}^3D_4(q^3)$. % of $U\!D_4(q^3)$.
It is clear that each $Y_i$ is abelian, the subgroup generated by all ${Y_i}$'s is a maximal unipotent subgroup and a Sylow $p$-subgroup of ${}^3D_4(q^3)$, denoted it by $U$.
Since ${Y_1}$, ${Y_5},$ ${Y_6}$ have order $q$ and ${Y_2}$, ${Y_3}$, ${Y_4}$ have order $q^3$, the Sylow $p$-subgroup $U$ of ${}^3D_4(q^3)$
has order $q^{3\cdot 3+3}=q^{12}$.
Using Table \ref{tab:commrelD4} we obtain the commutator relations among root subgroups ${Y_i}$ by direct computation. All $[y_i(t), y_j(u)]$ not listed bellow are equal to $1$. For all $t,u$ in the appropriate fields to root subgroups $Y_i$'s, we have
\begin{itemize}
\item[] $[{y_1}(t),{y_2}(u)]={y_3}(tu){y_4}(-tu\bar{u}){y_5}(tu\bar{u}\bar{\bar{u}}){y_6}(t^2u\bar{u}\bar{\bar{u}})$,

\item[] $[{y_2}(t),{y_3}(u)]={y_4}(t\bar{u}+\bar{t}u){y_5}(-t\bar{t}\bar{\bar{u}}-\bar{t}\bar{\bar{t}}u-\bar{\bar{t}}t\bar{u})
{y_6}(-t\bar{u}\bar{\bar{u}}-\bar{t}\bar{\bar{u}}u-\bar{\bar{t}}u\bar{u})$,

\item[] $[{y_2}(t),{y_4}(u)]={y_5}(t\bar{u}+\bar{t}\bar{\bar{u}}+\bar{\bar{t}}u)$,

\item[] $[{y_3}(t),{y_4}(u)]={y_6}(t\bar{u}+\bar{t}\bar{\bar{u}}+\bar{\bar{t}}u)$,

\item[] $[{y_1}(t),{y_5}(u)]={y_6}(tu)$.
\end{itemize}

%\begin{remark}
%% If $\Aut(\F_{p^f})$ has no elements whose order divides 3, then
%The $\tau$-fixed point subgroup of $U\!D_4(p^f)$ is isomorphic to a Sylow $p$-subgroup of $G_2(p^f)$, whose order is $p^{6f}$.
%\end{remark}

\smallskip
From these commutator relations, it is clear that $Z(U)=Y_6,$ $Z(U/Y_6)=Y_5Y_6,$ $Z(U/Y_5Y_6)=Y_4Y_5Y_6$, $Z(U/Y_4Y_5Y_6)=Y_3Y_4Y_5Y_6$, and $U/Y_3Y_4Y_5Y_6$ is abelian of order $q^4$.
To classify all irreducible characters of $U$, we come up with the definition of almost faithful irreducible characters.

\begin{definition}
\label{def:afaithful}
Let $\chi$ be an irreducible character of a group $G$. $\chi$ is said to be {\em almost faithful} if $Z(G)\not\leq\ker(\chi)$.
% where $\ker(\chi)=\{g\in G:\chi(g)=\chi(1)\}$.
\end{definition}

Due to the center series of $U$ and the inflation of irreducible characters from quotient groups, the irreducible characters $\Irr(U)$ are classified as follows.

$\FF_{6}:=\{\chi\in\Irr(U):Y_6\not\subset \ker(\chi)\}$, the set of all almost faithful irreducible characters of $U$.

$\FF_{5}:=\{\chi\in\Irr(U):Y_5\not\subset \ker(\chi) \mbox{ and } Y_6\leq \ker(\chi)\}$, the set of all almost faithful irreducible characters of $U/Y_6$.

$\FF_{4}:=\{\chi\in\Irr(U):Y_4\not\subset \ker(\chi) \mbox{ and } Y_5Y_6\leq \ker(\chi)\}$, the set of all almost faithful irreducible characters of $U/Y_5Y_6$.

$\FF_{3}:=\{\chi\in\Irr(U):Y_3\not\subset \ker(\chi) \mbox{ and } Y_4Y_5Y_6\leq \ker(\chi)\}$, the set of all almost faithful irreducible characters of $U/Y_4Y_5Y_6$.

$\FF_{lin}:=\{\chi\in\Irr(U):Y_3Y_4Y_5Y_6\leq \ker(\chi)\}$, the linear character set of $U$.

\begin{remark}
For $i\geq 3$, each $\chi\in\FF_{i}$ satisfies that $Y_i\leq Z(\chi)$ and $Y_i\not\subset \ker(\chi)$ where $Z(\chi)=\{u\in U:|\chi(u)|=\chi(1)\}$. Thus $\chi|_{Y_i}=\chi(1)\, \lambda$ for some $\lambda\in\Irr(Y_i)^\times$.
\end{remark}

%%%%%%%%%%%%%%%%%%%%%%%%%%%%%%%%%%%%%%%%%%%%%%%%%%%%%%%%%%%%%%%%%%%%%%%%%%%%%%%%%%%%%%

%%%%%%%%%%%%%%%%%%%%%%%%%%%%%   Priliminaries of Finite Field %%%%%%%%%%%%%%%%%%%%%%%%%%%%%%%%%%

%%%%%%%%%%%%%%%%%%%%%%%%%%%%%%%%%%%%%%%%%%%%%%%%%%%%%%%%%%%%%%%%%%%%%%%%%%%%%%%%%%%%%%

\subsection{Some fundamental field results}
\label{subsec:field}

Through out this paper, for each prime power $q$, we consider the field extension $\F_{q^3}/\F_q$. Fix nontrivial linear characters $\phi:\F_q\rightarrow \C^\times$, and $\varphi:\F_{q^3}\rightarrow \C^\times$. For each $a\in \F_q$, $b\in \F_{q^3}$, define $\phi_a$ and $\varphi_b$ by $\phi_a(x):=\phi(ax)$ for all $x\in\F_q$, and $\varphi_b(y):=\varphi(by)$ for all $y\in\F_{q^3}$. Hence, $\Irr(\F_q)=\{\phi_a: a\in \F_q\}$ and $\Irr(\F_{q^3})=\{\varphi_b: b\in\F_{q^3}\}$. Recall the Frobenius map $\rho:\F_{q^3}\to \F_{q^3},\ t\mapsto t^q$, the notations $\bar{t}:=\rho(t)$ and $\bar{\bar{t}}:=\rho(\bar{t})$ for all $t\in\F_{q^3}$.

\begin{definition} \label{def:All-letters}

For each $a\in \F_q$ and $t\in \F_{q^3}$, we define

\begin{itemize}
\item[(i)] $\T_a :=\{x^p-a^{p-1}x: x\in\F_q\}$,

\item[(ii)] $\A_t:=\{t\bar{u}+\bar{t}\bar{\bar{u}}+\bar{\bar{t}}u : u\in \F_{q^3} \}$,

\item[(iii)] $\B_t:=\{t^qu+tu^q: u\in \F_{q^3}\}$.
\end{itemize}

\end{definition}
\noindent Notice that $\T_0=\F_q,$ $\A_0=\{0\}=\B_0$. Now we observe a few important properties of $\T_a$, $\A_t$ and $\B_t$.

\begin{proposition} \label{prop:mainFp}
The following are true.

\begin{itemize}

\item[(i)] $x^p-a^{p-1}x=\prod_{c\in \F_p}(x-ca).$

\item[(ii)] If $a\in \F_q^\times$, then $\T_a$ is an additive subgroup of $\F_q$ of index $p$.

\item[(iii)] For each $a\in\F_q^\times,$ there exists $b\in\F_q^\times$ such that $b\T_a=\ker(\phi).$ Furthermore, $cb\T_a=\ker(\phi)$ iff $c\in\F_p^\times.$

\item[(iv)] $\{ \T_a:a\in\F_q^\times\}=\{\ker(\phi_a):a\in \F_q^\times\}$ are all subgroups  of index $p$ in $\F_q.$
\end{itemize}

\end{proposition}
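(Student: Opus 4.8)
The plan is to prove the four items essentially in order, since each feeds into the next.

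\medskip

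\noindent\textbf{Item (i).}
First I would verify the polynomial identity $x^p - a^{p-1}x = \prod_{c\in\F_p}(x-ca)$ for $a\in\F_q^\times$. The right-hand side is a monic polynomial of degree $p$; its roots are exactly $\{ca : c\in\F_p\}$, which are $p$ distinct elements of $\F_q$ because $a\ne 0$. On the other hand $x^p-a^{p-1}x = x(x^{p-1}-a^{p-1}) = x\prod_{c\in\F_p^\times}(x-ca)$, using that $y^{p-1}-1 = \prod_{c\in\F_p^\times}(y-c)$ over $\F_p$ (the nonzero elements of $\F_p$ are precisely the roots of $y^{p-1}-1$) applied to $y=x/a$ and then clearing denominators. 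For $a=0$ both sides are $x^p$, consistent with the remark that $\T_0=\F_q$. This is a routine check; no obstacle here.

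\medskip

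\noindent\textbf{Item (ii).}
Fix $a\in\F_q^\times$ and consider the additive map $\psi_a:\F_q\to\F_q$, $\psi_a(x)=x^p-a^{p-1}x$. It is $\F_p$-linear (Frobenius is additive and $x\mapsto a^{p-1}x$ is linear), so $\T_a=\im(\psi_a)$ is an additive subgroup of $\F_q$. By item (i), $\ker(\psi_a)=\{ca:c\in\F_p\}$ has order exactly $p$. Hence by rank–nullity over $\F_p$, $|\T_a| = |\F_q|/p$, i.e.\ $\T_a$ has index $p$ in $\F_q$.

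\medskip

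\noindent\textbf{Item (iii).}
Since $\phi$ is a nontrivial linear character of $\F_q$, its kernel $\ker(\phi)$ is an additive subgroup of index $p$. Both $\T_a$ and $\ker(\phi)$ are $\F_p$-subspaces of $\F_q$ of codimension $1$. The multiplicative group $\F_q^\times$ acts transitively on the set of codimension-$1$ $\F_p$-subspaces of $\F_q$: indeed, the hyperplanes of $\F_q$ as an $\F_p$-space are the kernels of the nonzero $\F_p$-linear functionals, the functionals are parametrized as $x\mapsto \mathrm{Tr}_{\F_q/\F_p}(\lambda x)$ for $\lambda\in\F_q^\times$ by nondegeneracy of the trace form, and scaling $x$ by $b$ turns the $\lambda$-hyperplane into the $\lambda b$-hyperplane; since $\lambda\mapsto\lambda b$ is transitive on $\F_q^\times$, so is the action on hyperplanes. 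Therefore there is $b\in\F_q^\times$ with $b\T_a=\ker(\phi)$. For the uniqueness clause: $cb\T_a=\ker(\phi)$ iff $cb\T_a = b\T_a$ iff $c\T_a=\T_a$; and $c\T_a=\T_a$ for $c\in\F_q^\times$ holds iff $c$ lies in the stabilizer of the hyperplane $\T_a$, which by the previous parametrization is exactly $\F_p^\times$. Alternatively, and more elementarily: $c\T_a=\T_a$ with $c\in\F_p^\times$ is immediate since $\T_a$ is an $\F_p$-subspace; conversely if $c\notin\F_p$ then $\{1,c\}$ is $\F_p$-independent, so $c\T_a=\T_a$ would force $c\cdot\F_q = \F_q$ to shrink... more carefully, if $c\T_a=\T_a$ then $\T_a$ is a module over the subring $\F_p[c]\subseteq\F_q$, but $\F_p[c]$ is a field strictly larger than $\F_p$ when $c\notin\F_p$, forcing $p\mid \log_p|\T_a| + 1 = \log_p q$ divisibility to be inconsistent with $|\T_a|=q/p$ unless $[\F_p[c]:\F_p]$ divides both $\log_p q$ and $\log_p q - 1$, hence equals $1$; so $c\in\F_p$. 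I expect the cleanest write-up uses the trace-functional description, so I would present that.

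\medskip

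\noindent\textbf{Item (iv).}
By item (ii) each $\T_a$ ($a\in\F_q^\times$) is a subgroup of index $p$, and by item (iii) each equals $b_a^{-1}\ker(\phi)=\ker(\phi\circ(b_a\,\cdot\,))=\ker(\phi_{b_a})$ for a suitable $b_a\in\F_q^\times$; rewriting, every $\T_a$ is of the form $\ker(\phi_a)$ for some parameter, and conversely every $\ker(\phi_a)$ with $a\in\F_q^\times$ is a hyperplane, hence by transitivity equals some $\T_{a'}$. Counting: the number of index-$p$ subgroups of $\F_q$ equals the number of hyperplanes, which is $(q-1)/(p-1)$; the map $a\mapsto\T_a$ from $\F_q^\times$ is exactly $(p-1)$-to-one by the uniqueness clause of (iii) (the fibers are cosets of $\F_p^\times$), so $\{\T_a:a\in\F_q^\times\}$ has size $(q-1)/(p-1)$ and therefore exhausts all index-$p$ subgroups. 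The same counting applies to $\{\ker(\phi_a):a\in\F_q^\times\}$, and together with the containment just shown the three sets coincide.

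\medskip

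\noindent The main obstacle is purely expository: getting item (iii) stated and proved crisply, i.e.\ identifying that the relevant structure is ``$\F_q^\times$ acting on codimension-$1$ $\F_p$-subspaces of $\F_q$'' and that this action is transitive with point-stabilizer $\F_p^\times$. Once that is in place, (ii) is linear algebra, (i) is a one-line factorization, and (iv) is a counting argument, so none of those present real difficulty.
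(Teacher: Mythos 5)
Your proposal is correct and follows essentially the same route as the paper: parts (i)--(ii) are the same factorization and rank--nullity argument, and part (iii) rests in both cases on the transitive action of $\F_q^\times$ on the $\F_p$-hyperplanes of $\F_q$ with point-stabilizer $\F_p^\times$. The only cosmetic difference is that you justify that transitivity via the trace-form parametrization of hyperplanes, whereas the paper instead exhibits the explicit identity $y^p\T_a=\T_{ay}$ and then compares $|\{\T_a:a\in\F_q^\times\}|$ with the hyperplane count $\frac{q-1}{p-1}$.
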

\begin{proof}
Part (i) is clear by checking their solutions directly. For each $a\in \F_q^\times$ we define $h_a(x):=x^p-a^{p-1}x$ for all $x\in \F_q$.
It is clear that $\T_a=\im(h_a)$ and $h_a$ is an $\F_p$-homomorphism with $\ker(h_a)=a\F_p$ by part (i). Thus part (ii) holds. Since part (iv) clearly follows part (iii), it suffices to show part (iii).

Let  $\mathfrak{S}:=\{\T_a: a\in\F_q^\times\}.$ For all $a,y\in\F_q^\times,$ we have $y^p\T_a=\T_{ay}$ since
\[y^ph_a(x)=(xy)^{p}-(ay)^{p-1}(xy)=h_{ay}(xy)\in\im(h_{ay})=\T_{ay}.\]
So $\F_q^\times$ acts transitively on the set $\mathfrak{S}.$ Since $\mathfrak{S}\neq \emptyset$ and $\F_p^\times\leq \Stab_{\F_q^\times}(\T_a)$, we have $0<|\mathfrak{S}|\leq \frac{q-1}{p-1}.$
It is enough to show that $\Stab_{\F_q^\times}(\T_a)=\F_p^\times$ by computing $|\mathfrak{S}|.$

Since the left multiplication action of $\F_q^\times=\{y^p:y\in\F_q^\times\}$ on the $\F_p$-hyperplane set of $\F_q$ is transitive and the number of $\F_p$-hyperplanes of $\F_q$ is $\frac{q-1}{p-1}$ which equals $|\mathfrak{S}|$, the claim holds.
\end{proof}

\begin{lemma} \label{lem:Frob-cubed}

For each $t\in \F_{q^3}^\times$, $\A_t=\F_q$.

\end{lemma}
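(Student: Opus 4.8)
The plan is to recognize $\A_t$ as the image of the field trace $\mathrm{Tr}:=\mathrm{Tr}_{\F_{q^3}/\F_q}$ and then use that this trace map is surjective.

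First I would compute, for an arbitrary $u\in\F_{q^3}$,
\[
\mathrm{Tr}(t\bar u)=t\bar u+\rho(t\bar u)+\rho^2(t\bar u)=t\bar u+\bar t\,\bar{\bar u}+\bar{\bar t}\,\rho(\bar{\bar u})=t\bar u+\bar t\,\bar{\bar u}+\bar{\bar t}u,
\]
where the last equality uses $\rho^3=1$ on $\F_{q^3}$, so that $\rho(\bar{\bar u})=u$. This identifies $\A_t=\{\mathrm{Tr}(t\bar u):u\in\F_{q^3}\}$; in particular $\A_t\subseteq\F_q$, which is already part of the claim.

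Next, since $u\mapsto\bar u$ is a bijection of $\F_{q^3}$ and, because $t\in\F_{q^3}^\times$, the map $v\mapsto tv$ is also a bijection of $\F_{q^3}$, reparametrising gives
\[
\A_t=\{\mathrm{Tr}(tv):v\in\F_{q^3}\}=\{\mathrm{Tr}(w):w\in\F_{q^3}\}=\im(\mathrm{Tr}).
\]
Finally I would invoke the standard fact that the trace of a finite separable field extension is onto; equivalently, $\mathrm{Tr}$ is a nonzero $\F_q$-linear functional on $\F_{q^3}$, so its image is the whole one-dimensional $\F_q$-space $\F_q$. Hence $\A_t=\F_q$.

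There is essentially no obstacle here; the only point deserving care is that the hypothesis $t\neq 0$ is used exactly to make $v\mapsto tv$ bijective (indeed $\A_0=\{0\}\neq\F_q$, as already noted after Definition~\ref{def:All-letters}). Should one prefer to avoid citing surjectivity of the trace, one can instead observe that $u\mapsto t\bar u+\bar t\,\bar{\bar u}+\bar{\bar t}u$ is $\F_q$-linear with image contained in the one-dimensional $\F_q$-space $\F_q$, so that image is $\{0\}$ or $\F_q$, and it is nonzero because the bilinear form $(x,y)\mapsto\mathrm{Tr}(xy)$ on $\F_{q^3}$ is nondegenerate.
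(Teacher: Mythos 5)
Your proof is correct, but it takes a genuinely different route from the paper's. You absorb $t$ and the Frobenius twist into the variable, identifying $\A_t=\{\mathrm{Tr}_{\F_{q^3}/\F_q}(t\bar u):u\in\F_{q^3}\}=\im(\mathrm{Tr}_{\F_{q^3}/\F_q})$, and then quote surjectivity of the trace (or, in your fallback, nondegeneracy of the trace form together with $\F_q$-linearity). The paper never names the trace: it notes $\A_t\subseteq\F_q$ because the elements are $\rho$-invariant, and then counts fibers directly, observing that for each $a\in\F_q$ the equation $\bar{\bar t}u+tu^q+\bar tu^{q^2}=a$ is polynomial of degree $q^2$ in $u$ and so has at most $q^2$ solutions, forcing the image to have at least $q^3/q^2=q$ elements. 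Your argument is cleaner and more conceptual — the reparametrisation $u\mapsto t\bar u$ is exactly where the hypothesis $t\neq0$ enters, which you correctly flag — while the paper's is self-contained and uses nothing beyond counting roots of a polynomial (which is, in effect, the standard proof that the trace is surjective, specialised to this situation). Both establish the same two inclusions, so there is no gap in either direction.
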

\begin{proof}
It is clear that $\A_t\subset \F_q$ since its elements are $\rho$-invariant. %%Since $\chr(\F_{q^3})=p$, $\A$ is an additive subgroup of $\F_q$,
So $|\A_t|\leq q$. It suffices to show that $|\A_t|=q$.

For each $a\in \F_q$, the equation $\bar{\bar{t}}u+tu^q+\bar{t}u^{q^2}=a$ has at most $q^2$ solutions for $u$ in $\F_{q^3}$. Therefore, when $u$ runs all over $\F_{q^3}$, $\A_t$ has at least $|\F_{q^3}|/q^2=q$ elements. This completes the proof.
\end{proof}

For each $t\in \F_{q^3}^\times$, we define an $\F_q$-homomorphism $f_t:\F_{q^3}\rightarrow \F_{q^3}$ given by $f_t(u):=t^qu+tu^q$ for all $u\in \F_{q^3}$. By Definition \ref{def:All-letters} (iii), $\B_t=\im(f_t)$.%=\{t^qu+tu^q: u\in \F_{q^3}\}$.

\begin{lemma} \label{lem:odd-even_3D4}

For all $t\in\F_{q^3}^\times$, the following hold.

\begin{itemize}
\item[(i)] $vx\in \B_t$ for all $v\in \B_t$ and $x\in\F_q$. Hence, $\B_{tx}=\B_t=x\B_t$ for all $x\in\F_q^\times$.

\item[(ii)] If $q$ is odd, then $\B_t=\F_{q^3}$.

\item[(iii)] If $q$ is even, then $\ker(f_t)=t \F_q$ and $\B_t\leq (\F_{q^3},+)$ of order $q^2$.
\end{itemize}

\end{lemma}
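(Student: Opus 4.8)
The plan is to analyze the $\F_q$-linear map $f_t(u)=t^qu+tu^q$ on $\F_{q^3}$ directly. First I would establish (i): if $v=f_t(u)\in\B_t$ and $x\in\F_q$, then since $x=\bar x$ we have $vx = t^q(ux)+t(ux)^q = f_t(ux)$, so $vx\in\B_t$; this shows $\B_t$ is an $\F_q$-subspace (it is already an additive group as the image of an additive homomorphism). The identities $\B_{tx}=\B_t=x\B_t$ for $x\in\F_q^\times$ then follow: $f_{tx}(u)=(tx)^qu+(tx)u^q = x f_t(u)$ using $x^q=x$, so $\im(f_{tx})=x\,\im(f_t)=\B_t$ by what was just proved.

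For (ii) and (iii) the key is to compute $\ker(f_t)$. We have $f_t(u)=0$ iff $t^qu=-tu^q$, i.e. $u^q/u = -t^q/t$ (for $u\neq 0$), equivalently $(u/t)^{q-1}\cdot(\text{correction})$; more cleanly, write $w=u/t$, so $f_t(u)=0$ becomes $t^q t w = -t\,t^q w^q$, i.e. $w = -w^q$, i.e. $w^q=-w$. When $q$ is odd, I would argue that $w^q=-w$ forces $w\in\F_q$ only if $w=0$: indeed $w^q=-w$ gives $w^{q^2}=-w^q=w$, so $w\in\F_{q^2}\cap\F_{q^3}=\F_q$, whence $w=w^q=-w$, so $2w=0$ and $w=0$; thus $\ker(f_t)=0$, $f_t$ is injective, hence bijective, and $\B_t=\F_{q^3}$. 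When $q$ is even, $w^q=-w=w$ means exactly $w\in\F_q$, so $\ker(f_t)=\{tw:w\in\F_q\}=t\F_q$, which has order $q$; by rank–nullity $|\B_t|=q^3/q=q^2$, and $\B_t\leq(\F_{q^3},+)$ is the claimed subgroup.

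The main obstacle — really the only subtle point — is getting the characteristic-$2$ versus odd dichotomy to emerge cleanly, which hinges on whether the equation $w^q=-w$ has nonzero solutions: in odd characteristic the intermediate-field argument ($w\in\F_{q^2}\cap\F_{q^3}=\F_q$ forcing $2w=0$) does the job, while in characteristic $2$ the minus sign disappears and one is simply counting $\F_q$ inside $\F_{q^3}$. Everything else is routine linear algebra over $\F_q$. I would present (ii) and (iii) together via this $\ker(f_t)$ computation, then read off the orders from rank–nullity, noting that in case (iii) we already know $\B_t\neq\F_{q^3}$ so the subgroup is proper of index $q$.
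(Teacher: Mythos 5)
Your proposal is correct and follows essentially the same route as the paper: part (i) via $\F_q$-linearity of $f_t$, and parts (ii)--(iii) by computing $\ker(f_t)$ and splitting on the parity of $q$. The only (cosmetic) difference is in the odd case: the paper rules out solutions of $(t^{-1}u)^{q-1}=-1$ by noting that $|\F_{q^3}^\times|=(q-1)(q^2+q+1)$ with $q^2+q+1$ odd, whereas you deduce from $w^q=-w$ that $w\in\F_{q^2}\cap\F_{q^3}=\F_q$ and hence $2w=0$; both arguments are valid.
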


\begin{proof} Part (i) is clear by the $\F_q$-homomorphism property of $f_t$. Thus, it suffices to show parts (ii) and (iii) by finding the kernel of $f_t$.

We have $t^qu+tu^q=t^qu(1 + (t^{-1}u)^{q-1})$. So $f_t(u)=0$ iff $u=0$ or $(t^{-1}u)^{q-1}=-1$. Now we divide the proof into two cases.

Case $q$ odd: Since $\F_{q^3}^\times$ is cyclic of order $q^3-1=(q-1)(q^2+q+1)$ and $(q^2+q+1)$ is odd, there is no $z\in \F_{q^3}^\times$ such that $z^{q-1}$ has order 2. So the equation $(t^{-1}u)^{q-1}=-1$ has no solution for $u\in \F_{q^3}$. This shows that $\ker(f_t)=\{0\}$ and $\im(f_t)=\F_{q^3}$.

Case $q$ even: The equation $(t^{-1}u)^{q-1}=1$ implies that $t^{-1}u\in \F_q^\times$, i.e. $u\in t \F_q^\times$. It is clear that $t\F_q\subset \ker(f_t)$. So $\ker(f_t)=t\F_q$ and $|\B_t|=q^2$.
\end{proof}

\begin{lemma} \label{lem:char2-3D4}

If $q$ is even, the following hold.

\begin{itemize}

\item[(i)] $\ker(f_1)=\F_q$ and $\F_{q^3}=\ker(f_1) \oplus \B_1$.

\item[(ii)] $\B_t=t^{q+1}\B_1$ for all $t\in\F_{q^3}^\times$.

\item[(iii)] $\{\B_t:t\in\F_{q^3}^\times\}$ is the $\F_q$-hyperplane set of $\F_{q^3}$. So $|\B_t\cap \B_r|=q$ iff $t\not \in r \F_q$.

\item[(iv)] There exists uniquely $t\in \F_{q^3}^\times$, up to a scalar of $\F_q^\times$, such that $\B_t \subset \ker(\varphi)$.

\end{itemize}

\end{lemma}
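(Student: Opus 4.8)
The plan is to build everything on the structural facts about $f_t$ already obtained in Lemma \ref{lem:odd-even_3D4}, specializing to $t=1$ first and then transporting the result by scaling. For part (i), I would note that in characteristic $2$ the map $f_1(u)=u+u^q$ is additive and, by Lemma \ref{lem:odd-even_3D4}(iii) applied with $t=1$, has kernel $\F_q$ and image $\B_1$ of size $q^2$; since $\F_q\cap\B_1$ would have to be a proper $\F_q$-subspace of the one-dimensional $\F_q$-space $\F_q$, the intersection is trivial, and a dimension count over $\F_q$ (namely $1+2=3$) gives the internal direct sum $\F_{q^3}=\ker(f_1)\oplus\B_1=\F_q\oplus\B_1$. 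For part (ii), the computation $t^qu+tu^q=t^{q+1}\bigl((t^{-1}u)+(t^{-1}u)^q\bigr)$, valid because $t^q\cdot t = t^{q+1}$ and $t\cdot t^{-q}=t^{1-q}=t^{q+1}\cdot t^{-2q}$ — better: simply substitute $v=t^{-1}u$ to get $f_t(u)=t^{q+1}f_1(v)$ as $u$ ranges over $\F_{q^3}$ — shows $\B_t=t^{q+1}\B_1$ directly.

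For part (iii), I would first observe that $t\mapsto t^{q+1}$ is the norm map $N\colon\F_{q^3}^\times\to\F_{q^3}^\times$ composed with nothing special: its image is the subgroup of index $\gcd(q+1,q^3-1)$. A cleaner route: by part (ii) the collection $\{\B_t:t\in\F_{q^3}^\times\}$ equals $\{t^{q+1}\B_1:t\in\F_{q^3}^\times\}$, and each such set is an $\F_q$-hyperplane by Lemma \ref{lem:odd-even_3D4}(iii) (it is the image of $f_t$, of $\F_q$-dimension $2$, and $\F_q$-invariant by Lemma \ref{lem:odd-even_3D4}(i)). To see that \emph{every} $\F_q$-hyperplane arises, I count: by Lemma \ref{lem:odd-even_3D4}(i) we have $\B_{tx}=\B_t$ for $x\in\F_q^\times$, so the map $t\mapsto\B_t$ factors through $\F_{q^3}^\times/\F_q^\times$, a set of size $q^2+q+1$, which is exactly the number of $\F_q$-hyperplanes of the $3$-dimensional space $\F_{q^3}$. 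It then suffices to show $t\mapsto\B_t$ is injective on $\F_{q^3}^\times/\F_q^\times$; if $\B_t=\B_r$ then $t^{q+1}\B_1=r^{q+1}\B_1$, so $(r/t)^{q+1}\in\mathrm{Stab}(\B_1)$ inside $\F_{q^3}^\times$, and I must check this stabilizer is exactly $\F_q^\times$ — this is the one genuinely delicate point, which I expect to handle by showing $\B_1$ determines $\F_q=\ker(f_1)$ as its "radical" under the symmetric $\F_q$-bilinear form $(u,v)\mapsto \mathrm{Tr}_{\F_{q^3}/\F_q}(uv)$ or, more elementarily, by observing that a scalar $\lambda$ with $\lambda\B_1=\B_1$ must also satisfy $\lambda\ker(f_1)=\ker(f_1)$ by the direct sum in part (i), forcing $\lambda\F_q=\F_q$, i.e. $\lambda\in\F_q^\times$. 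The last assertion, $|\B_t\cap\B_r|=q$ iff $t\notin r\F_q$, is then immediate: distinct $\F_q$-hyperplanes in a $3$-space meet in a $1$-dimensional subspace.

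For part (iv), $\ker(\varphi)$ is an $\F_p$-hyperplane of $\F_{q^3}$, but I need it among the $\F_q$-hyperplanes $\B_t$. Here I would use that $\B_t\subset\ker(\varphi)$ forces $\ker(\varphi)$ to contain an $\F_q$-subspace of codimension $1$, and since $[\F_{q^3}:\F_q]=3$ while $\ker(\varphi)$ has $\F_p$-codimension $1$, comparing sizes $q^2$ versus $q^3/p$ shows this can happen only when the $\F_p$-hyperplane $\ker(\varphi)$ happens to be $\F_q$-stable, i.e. is genuinely an $\F_q$-hyperplane. Existence and uniqueness then follow from part (iii): exactly one $\F_q$-hyperplane can equal the given $\F_q$-stable $\ker(\varphi)$ — wait, I must instead argue that $\ker(\varphi)$ \emph{is} $\F_q$-stable. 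In fact it need not be in general; the correct statement is that among the $q^2+q+1$ hyperplanes $\B_t$ there is a unique one contained in $\ker(\varphi)$, which I would get by a counting/averaging argument: $\sum_{t}[\B_t\subseteq\ker\varphi]$ can be computed since $\ker(\varphi)\cap\B_t$ is either $\B_t$ (size $q^2$) or a hyperplane of $\B_t$ (size $q$), and summing $|\ker(\varphi)\cap\B_t|$ over representatives $t$, using that the $\B_t$ cover $\F_{q^3}$ with controlled multiplicity by part (iii), pins the count to exactly $1$. The main obstacle is precisely this last counting step and the stabilizer computation in (iii); everything else is dimension-counting over $\F_q$ once Lemma \ref{lem:odd-even_3D4} is in hand.
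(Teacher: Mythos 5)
Your outline correctly isolates the load-bearing facts, but at each of the three places you yourself flag as delicate the argument as written either begs the question or uses a wrong count. In (i), the entire content is that $\F_q\cap\B_1=\{0\}$, and you simply assert that this intersection ``would have to be a proper subspace'' of $\F_q$; nothing in rank--nullity forces that. Indeed the same map $u\mapsto u+u^q$ on $\F_{q^2}$ has $\ker=\im=\F_q$, so some feature specific to the cubic extension must enter. The paper supplies it via the trace: if $u\notin\F_q$ then $u+u^q=\mathrm{Tr}_{\F_{q^3}/\F_q}(u)+u^{q^2}\notin\F_q$, while for $u\in\F_q$ one gets $u+u^q=2u=0$ in characteristic $2$. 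In (iii), your ``more elementary'' computation of $\Stab_{\F_{q^3}^\times}(\B_1)$ is a non sequitur: $\lambda\B_1=\B_1$ shows only that $\lambda\F_q$ is \emph{some} complement of $\B_1$, not that it equals $\F_q$, and complements are not unique. The trace-form (``radical'') alternative you mention would work, but you do not carry it out. The paper sidesteps the issue: since $\gcd(q+1,q^3-1)=1$, the sets $t^{q+1}\B_1$ exhaust the full $\F_{q^3}^\times$-orbit of $\B_1$, which by transitivity of $\F_{q^3}^\times$ on $\F_q$-hyperplanes is all $\frac{q^3-1}{q-1}$ of them; orbit--stabilizer then gives $|\Stab_{\F_{q^3}^\times}(\B_1)|=q-1$, whence it equals $\F_q^\times$ by Lemma \ref{lem:odd-even_3D4}(i). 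You have both ingredients on the page (the gcd remark and the $\F_q$-invariance) but do not assemble them this way.

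In (iv) your dichotomy is wrong: $\ker(\varphi)\cap\B_t$ is the kernel of the restriction $\varphi|_{\B_t}$, hence an $\F_p$-subspace of $\B_t$ of index $1$ or $p$, i.e.\ of size $q^2$ or $q^2/p$ --- not ``$q^2$ or $q$'' --- and with the sizes you wrote the averaging computation does not close. (With the correct sizes it does: each nonzero vector lies on exactly $q+1$ of the $q^2+q+1$ hyperplanes, and $Nq^2+(q^2+q+1-N)q^2/p=(q^2+q+1)+(q+1)(q^3/p-1)$ forces $N=1$.) The paper's route is shorter: uniqueness holds because distinct $\F_q$-hyperplanes satisfy $\B_t+\B_s=\F_{q^3}\not\subseteq\ker(\varphi)$; existence holds because the proper subgroup $\B_1$ lies in $\ker(\varphi_b)=b^{-1}\ker(\varphi)$ for some $b\neq 0$, so $b\B_1\subseteq\ker(\varphi)$, and $b\B_1$ is again some $\B_t$ by (iii). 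Part (ii) of your proposal is correct and coincides with the paper's argument.
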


\begin{proof}
(i) Recall that $\B_1=\{u+u^q: u\in\F_{q^3}\}$ and $\ker(f_1)=\F_q$ by Lemma \ref{lem:odd-even_3D4} (iii). Suppose that $\ker(f_1)\cap \B_1$ is nontrivial, i.e. there exist $x\in \F_q^\times$ and $u\in \F_{q^3}^\times$ such that $x=u+u^q$.

If $u\in \F_q$, then $x=u+u^q=2u=0$ since $\chr(\F_q)=2$, contrary to $x\in\F_q^\times$.
%by Galois field extension
Notice that $\F_{q^2}\not\subset \F_{q^3}$. If $u\not\in\F_q$, from $u^{q^2}+u^q+u\in \F_q$, we have $u+u^q\not\in \F_q$, contrary to $x=u+u^q\in \F_q$. So $\ker(f_1)\cap \B_1=\{0\}$.

By Lemma \ref{lem:odd-even_3D4} (iii) with $|\ker(f_1)|=q$ and $|\B_1|=q^2$, we have $\F_{q^3}=\ker(f_1)\oplus \B_1$.

\smallskip
(ii) From $t^qu+tu^q=t^{q+1}(ut^{-1}+(ut^{-1})^q)$ and $\{(ut^{-1})+(ut^{-1})^q:u\in\F_{q^3}\}=\B_1$, the claim is clear.

\smallskip
(iii) Since $q$ is even and $q^3-1=(q-1)(q^2+q+1)$, $\gcd(q+1,q^3-1)=1$ and $\{t^{q+1}:t\in\F_{q^3}^\times\}=\F_{q^3}^\times$. By part (ii), $\F_{q^3}^\times$ acts transitively on the set of all $\B_t$'s. However, the left multiplication action of $\F_{q^3}^\times$ is also transitive on the $\F_q$-hyperplane set of $\F_{q^3}$. This shows that $\{\B_t:t\in\F_{q^3}^\times\}$ is the $\F_q$-hyperplane set of $\F_{q^3}$.

By part (ii), $\B_t=\B_r$ iff $(tr^{-1})^{q+1}\in\Stab_{\F_{q^3}^\times}(\B_1)$. The transitivity of $\F_{q^3}^\times$ on the $\F_q$-hyperplane set, whose cardinality is $\frac{q^3-1}{q-1}$, implies that $|\Stab_{\F_{q^3}^\times}(\B_1)|=q-1$. By Lemma \ref{lem:odd-even_3D4} (i), $\Stab_{\F_{q^3}^\times}(\B_1)=\F_q^\times$. It is clear that $(tr^{-1})^{q+1}\in\F_q^\times$ iff $t\in r\F_q^\times$.
By the property of $\F_q$-hyperplanes in $\F_{q^3}$, the rest of the claim is clear.

\smallskip
(iv) The uniqueness follows by part (iii) since $\B_t+\B_s=\F_{q^3} \supsetneq \ker(\varphi)$ for any $\B_t\neq\B_s$.
%if $\B_t,$ $\B_s\subset \ker(\varphi)$ then $\B_t=\B_s$, otherwise $\B_t+\B_s=\F_{q^3} \gneq \ker(\varphi)$.
It is clear that $\B_1$ is contained in $\ker(\varphi_b)$ for some $\varphi_b\in\Irr(\F_{q^3})^\times$. The existence follows by the transitivity of the left multiplication action of $\F_{q^3}^\times$ on the $\F_p$-hyperplane set of $\F_{q^3}$, also known as $\{\ker(\varphi_r):\varphi_r\in\Irr(\F_{q^3})^\times\}$.
\end{proof}

%%%%%%%%%%%%%%%%%%%%%%%%%%%%%%%%%%%%%%%%%%%%%%%%%%%%%%%%%%%%%%%%%%%%

%%%%%%%%%  3D4(q): Main Part %%%%%%%%%%%%%%%%

%%%%%%%%%%%%%%%%%%%%%%%%%%%%%%%%%%%%%%%%%%%%%%%%%%%%%%%%%%%%%%%%%%%%

\section{Characters of Sylow $p$-subgroups of the triality groups ${}^3D_4(q^3)$}
\label{sec:proof}

Let $\chi\in\Irr(U)$. We shall prove Theorem \ref{mainthm} by constructing $\chi$ through each family from $\FF_{6}$ to $\FF_{5},...,\FF_{lin}$ as discussed in Subsection \ref{subsec:root}. We recall the commutator relations computed in Subsection \ref{subsec:root}.
\begin{itemize}
\item[] $[{y_1}(t),{y_2}(u)]={y_3}(tu){y_4}(-tu\bar{u}){y_5}(tu\bar{u}\bar{\bar{u}}){y_6}(t^2u\bar{u}\bar{\bar{u}})$,

\item[] $[{y_2}(t),{y_3}(u)]={y_4}(t\bar{u}+\bar{t}u){y_5}(-t\bar{t}\bar{\bar{u}}-\bar{t}\bar{\bar{t}}u-\bar{\bar{t}}t\bar{u})
{y_6}(-t\bar{u}\bar{\bar{u}}-\bar{t}\bar{\bar{u}}u-\bar{\bar{t}}u\bar{u})$,

\item[] $[{y_2}(t),{y_4}(u)]={y_5}(t\bar{u}+\bar{t}\bar{\bar{u}}+\bar{\bar{t}}u)$,

\item[] $[{y_3}(t),{y_4}(u)]={y_6}(t\bar{u}+\bar{t}\bar{\bar{u}}+\bar{\bar{t}}u)$,

\item[] $[{y_1}(t),{y_5}(u)]={y_6}(tu)$.
\end{itemize}

\subsection{Family $\FF_{6}$ where $\chi$ is almost faithful}
\label{subsec:F6}
Let $T:=Y_1Y_3Y_4Y_5Y_6$ and $V:=Y_4Y_5Y_6$. It is easy to check that $V$ is abelian, $V\lhd T\lhd U$ and $Z(T)=Y_6$. By Lemma \ref{lem:Frob-cubed} and Clifford theory with the transversal $Y_1Y_3$ of $V$ in $T$, all $\lambda\in \Irr(V)$ such that $\lambda|_{Y_6}\neq 1_{Y_6}$ satisfy $\lambda^T\in\Irr(T)$ of degree $q^4$. Thus all almost faithful irreducible characters of $T$ have degree $q^4$. Since $\chi|_T$ decomposes into sum of almost faithful irreducible characters, % of $T$,
$\chi(1)\geq q^4$. Since $\lambda^T|_{Y_4Y_5}$ is the regular character of the abelian group $Y_4Y_5$, $\chi|_V$ has a linear constituent $\theta$ such that $\theta|_{Y_5}=1_{Y_5}$.

Let $H:={Y_6}{Y_5}{Y_4}{Y_2}$. Clearly that $H\leq U$ and $Z(H)=Y_5Y_6$. By the existence of $\theta$, let $\xi$ be an irreducible constituent of $\chi$ restricted to $H$ such that $\xi|_{Y_5}=\xi(1)1_{Y_5}$. Since $Y_5\leq \ker(\xi)$, $\xi$ can be considered as a character of $H/Y_5\cong {Y_6}{Y_4}{Y_2}$, which is abelian. So $\xi$ is linear. We shall show that $\xi^{U}\in \Irr(U)$, which implies that $\chi$ is the unique constituent in $\Irr(U,\xi)$, i.e. $\chi=\xi^{U}$. Since $H\cap T=V$ and $U=HT$, by Mackey formula for the double coset $H\backslash U/T$ we have $\xi^U|_T={\xi|_{H\cap T}}^T\in\Irr(T)$. Thus $\xi^U\in\Irr(U)$ as claimed. So all almost faithful irreducibles %characters
have degree $q^4$.

By the extension of each $\lambda^T$ to $U$ through $Y_2$, we obtain $(q-1)q^3$ almost faithful irreducible characters of degree $q^4$, parameterized by $(Y_6^\times, Y_2)\cong(\F_q^\times,\F_{q^3})$ and denoted by $\chi_{6,q^4}^{a,b}$.

\subsection{Family $\FF_{5}$ where ${Y_6}\subset \ker(\chi)$ and ${Y_5}\not\subset \ker(\chi)$}
\label{subsec:F5}
We study the quotient group $\bar{U}:=U/{Y_6}$. Abusing terminology slightly we call the image under the natural projection of a root group of $U$, a root group of  $\bar U$. Recall that the commutator relations in $\bar U$ are as follows.
\begin{itemize}
\item[] $[{y_1}(t),{y_2}(u)]={y_3}(tu){y_4}(-tu\bar{u}){y_5}(tu\bar{u}\bar{\bar{u}})$,

\item[] $[{y_2}(t),{y_3}(u)]={y_4}(t\bar{u}+\bar{t}u){y_5}(-t\bar{t}\bar{\bar{u}}-\bar{t}\bar{\bar{t}}u-\bar{\bar{t}}t\bar{u})$,

\item[] $[{y_2}(t),{y_4}(u)]={y_5}(t\bar{u}+\bar{t}\bar{\bar{u}}+\bar{\bar{t}}u)$.
\end{itemize}

Let $\lambda$ be a linear character of the abelian normal subgroup $H:={Y_5}{Y_4}{Y_3}{Y_1}\lhd \bar U$.
By Lemma \ref{lem:Frob-cubed}, for each ${y_2}(t)\in Y_2^\times$, there exists some ${y_4}(u)\in {Y_4}$ such that $[{y_4}(u),{y_2}(t)] \not \in \ker(\lambda)$, i.e. ${}^{y_2(t)}\lambda({y_4}(u))\neq \lambda({y_4}(u))$. So the inertia group $I_{\bar U}(\lambda)= H$. By Clifford theory, $\lambda^{\bar{U}}\in \Irr(\bar U)$.

Since $q-1$ nontrivial linear characters of $Y_5$ extend to $(q-1)q^7$ linear characters of $H$, by the above argument we obtain $(q-1)q^4$ irreducibles %characters
of degree $q^3$ in this family, parameterized by $(Y_5^\times,Y_1,Y_3)\cong (\F_q^\times,\F_q,\F_{q^3})$ and denoted by $\chi_{5,q^3}^{a_1,a_2,b}$.

\subsection{Family $\FF_{4}$ where ${Y_5}{Y_6}\subset \ker(\chi)$ and ${Y_4}\not\subset \ker(\chi)$}
\label{subsec:F4}
We study the quotient group  $\bar U:=U/{Y_6}{Y_5}$. Recall that the commutator relations in $\bar{U}$ are as follows.
\begin{itemize}
\item[] $[{y_1}(t),{y_2}(u)]={y_3}(tu){y_4}(-tu\bar{u})$,

\item[] $[{y_2}(t),{y_3}(u)]={y_4}(t\bar{u}+\bar{t}u)$.
\end{itemize}

Let $\lambda$ be a linear character of the abelian normal subgroup $H:={Y_4}{Y_3}{Y_1}\lhd \bar U$ such that $\lambda|_{{Y_4}}\neq 1_{Y_4}$.
Since ${Y_2}$ is a transversal of $H$ in $\bar{U}$, to obtain the inertia group $I_{\bar{U}}(\lambda)$, we find all ${y_2}(t)\in {Y_2}$ such that ${}^{{y_2}(t)}\lambda=\lambda$. %${}^{{y_2}(t)}\lambda({y_3}(u_3){y_1}(u_1))=\lambda({y_3}(u_3){y_1}(u_1))$ for all ${y_i}(u_i)\in {Y_i}$, $i=1,3$.
First we compute the stabilizer $\Stab_{{Y_2}}(\lambda|_{{Y_3}})$. %\leq {Y_2}$.
We have
\[\begin{array}{ll}
{}^{{y_2}(t)}\lambda({y_3}(u))&=\lambda({y_3}(u)^{{y_2}(t)})\\
&=\lambda({y_3}(u))\lambda([{y_3}(u),{y_2}(t)])\\
&=\lambda({y_3}(u))\lambda({y_4}(-(t\bar{u}+\bar{t}u))).
\end{array}\]

By Lemma \ref{lem:odd-even_3D4} (ii) with $q$ odd, the inertia group $I_{\bar{U}}(\lambda)=H$. By Clifford theory, $\lambda^{\bar{U}}\in \Irr(\bar{U})$. Hence there are $(q^3-1)q$ irreducible characters of degree $q^3$ in this family, parameterized by $(Y_4^\times,Y_1)\cong (\F_{q^3}^\times,\F_q)$ and denoted by $\chi_{4,q^3}^{b,a}\in\FF^{odd}_{4}$.

\smallskip
Now we observe the case where $q$ is even. Fix $\lambda|_{{Y_4}}=\varphi_{c_4}$ where $c_4\in\F_{q^3}^\times$ as described in Subsection \ref{subsec:field}. We shall find the orbit of $\lambda$ under the action of $Y_2$.
Recall the function $f_t(u):=t^qu+tu^q$ defined right before Lemma \ref{lem:odd-even_3D4}. We have
\[{}^{{y_2}(t)}\lambda({y_3}(u))=\lambda({y_3}(u))\lambda({y_4}((t\bar{u}+\bar{t}u)))=\varphi(c_3u)\, \varphi_{c_4}(f_t(u)).\]
So ${}^{y_2(t)}\lambda|_{Y_3}=\lambda|_{Y_3}$ iff $\varphi_{c_4}(f_t(u))=1$ for all $u\in\F_{q^3}$, i.e. $\im(f_t)\subset \ker(\varphi_{c_4})$. By Lemma \ref{lem:char2-3D4} (iv), there exists unique $t_0\in\F_{q^3}^\times$, up to a scalar of $\F_q^\times$, such that $\im(f_{t_0})\subset \ker(\varphi_{c_4})$. By Lemma \ref{lem:char2-3D4} (iii) $St_2:=\Stab_{Y_2}(\lambda|_{Y_3})=\{y_2(rt_0):r\in\F_q\}$. Thus, under the action of $Y_2$, $\Irr(Y_3)$ decomposes into $q$ orbits; each has size $q^2$.

Let $\lambda|_{Y_3}=\varphi_{c_3}$, $c_3\in\F_{q^3}$. Now we compute $St:=\Stab_{Y_2}(\lambda)=\Stab_{St_2}(\lambda|_{Y_1})$. % We have
\[\begin{array}{ll}
{}^{{y_2}(rt_0)}\lambda({y_1}(v))%&=\lambda({y_1}(v)^{{y_2}(rt_0)})\\
&=\lambda({y_1}(v))\lambda([{y_1}(v),{y_2}(rt_0)])\\
&=\lambda({y_1}(v))\lambda({y_3}(vrt_0){y_4}(-vrt_0\overline{rt_0}))\\
%&=\lambda({y_1}(v))\varphi_{c_3}(vrt_0)\varphi_{c_4}(vr^2t_0^{q+1})\\
&=\lambda({y_1}(v))\varphi(c_3vrt_0+c_4vr^2t_0^{q+1})\\
&=\lambda({y_1}(v))\varphi_{c_4}(vrt_0(c_3c_4^{-1}+rt_0^q)).
\end{array}\]
So ${}^{{y_2}(rt_0)}\lambda|_{Y_1}=\lambda|_{Y_1}$ iff $\varphi_{c_4}(vrt_0(c_3c_4^{-1}+rt_0^q))=1$ for all $v\in\F_q$.
We find $r\in\F_q^\times$ with parameter $c_3\in \F_{q^3}$ to obtain that $vrt_0(c_3c_4^{-1}+rt_0^q)\in \ker(\varphi_{c_4})$
for all $v\in \F_q$. Notice that if $c_3c_4^{-1}t_0^{-q}\in\F_q^\times$ then it is a nontrivial solution for $r$. We shall see that $q-1$ values of $c_3\in\F_{q^3}^\times$ such that $c_3c_4^{-1}t_0^{-q}\in\F_q^\times$  correspond to $q-1$ orbits of $\lambda$ whose $\Stab_{Y_1}(\lambda)$ has order $2$, and this is the unique nontrivial solution for $r\in\F_q^\times$ in each orbit.
Now we suppose that $c_3c_4^{-1}t_0^{-q}\not\in \F_q^\times$. So $c_3c_4^{-1}+rt_0^q\neq 0$ for all $r\in\F_q^\times$.
For each $r\in\F_q^\times$, let $T(r,c_3):=\{vrt_0(c_3c_4^{-1}+rt_0^q): v\in\F_q\}$ be a one-dimensional $\F_q$-subspace of $\F_{q^3}$.
Assume that $T(r,c_3)\subset \ker(\varphi_{c_4})\subset \F_{q^3}$.

If $T(r,c_3)\cap \B_{t_0}=\{0\}$, then $\F_{q^3}=T(r,c_3)\oplus\B_{t_0}\subset \ker(\varphi_{c_4})$, a contradiction. %which is impossible.
Therefore, $T(r,c_3)\cap \B_{t_0}$ is nontrivial. By Lemma \ref{lem:odd-even_3D4} (i), we have $T(r,c_3)\subset \B_{t_0}$. So
$rt_0(c_3c_4^{-1}+rt_0^q)\in \B_{t_0}$. By Lemma \ref{lem:char2-3D4} (ii) with $B_{t_0}=t_0^{q+1}\B_1$, there exists $y\in \B_{1}$ such that $rt_0(c_3c_4^{-1}+rt_0^q)=t_0^{q+1}y$. Solve this equation for $c_3$ with parameter $r\in\F_q^\times$, we have $c_3\in\{c_4t_0^q(r^{-1}y+r):r\in \F_q^\times,y\in \B_1\}=:I_3$. %Notice that $r^2\not\in\B_1$ by Lemma \ref{lem:char2-3D4} (i).

We claim $|I_3|=(q-1)q^2$ by proving that if $r^{-1}y+r=s^{-1}z+s$ for some $r,s\in \F_q^\times$ and $y,z\in\B_1$ then $r=s$ and $y=z$. Since $\B_1$ is an $\F_q$-vector space, $r^{-1}y+s^{-1}z\in \B_1$. From $r^{-1}y+r+s^{-1}z+s=0\in \B_1$, we have $r+s\in\B_1$. By Lemma \ref{lem:char2-3D4} (i) with $\F_{q^3}=\F_q \oplus \B_1$, we have $r+s\in\F_q\cap \B_1=\{0\}$, i.e. $r=s$.

This shows that each $c_3\in I_3$ determines uniquely $r_0\in\F_q^\times$ and $y\in\B_1$ such that $c_3=c_4t_0^q(r_0^{-1}y+r_0)$.
Notice that $r^{-1}y+r\in\F_q^\times$ iff $y=0$ where $r\in\F_q^\times$ and $y\in\B_1$. The above argument also confirms our discussion on $q-1$ values of $c_3\in\F_{q^3}^\times$ such that $c_3c_4^{-1}t_0^{-q}=r_0\in\F_q^\times$ which is the unique nontrivial solution of $r$, and the $q^2$-size orbit of $\lambda$ with this $r_0$  is given by obtaining all $c_3\in I_3$ from running all $y\in\B_1$.

These $(q-1)q^3$ linear characters $\lambda$ of $H$ corresponding to $c_3\in I_3^\times$ (counting $\lambda|_{Y_1}$ for other $q$ copies) have the stabilizer $St=\Stab_{{Y_2}}(\lambda)=\{1,{y_2}(r_0t_0)\}$ and $I_{\bar{U}}(\lambda)=HSt$.
It is easy to see that $[HSt,HSt]\leq \ker(\lambda)$. Thus, these $(q-1)q^3$ linear characters extend to $2(q-1)q^3$ linear characters (of their inertia groups) and induce irreducibly to $\bar{U}$, which gives $2(q-1)q^3/(q^3/2)=4(q-1)$ irreducible characters of degree $q^3/2$.
Together with all $c_4\in\F_{q^3}^\times$, we obtain $4(q^3-1)(q-1)$ irreducible characters of degree $q^3/2$ parameterized by $(Y_4^\times,Y_3^*,Y_2^*,Y_1^*)\cong (\F_{q^3}^\times,\F_q^\times,\F_2,\F_2)$ and denoted by $\chi_{4,\frac{q^3}{2}}^{b,a,c_1,c_2}\in\FF^{even}_{4}$, which is proven bellow in details.

The other $q^3$ linear characters $\lambda$ corresponding to $c_3\in\F_{q^3}-I_3$ have $\Stab_{Y_2}(\lambda)=\{1\}$. This shows that $Y_2$ acts transitively on these $q^3$ linear characters and their inertia groups equal $H$. (Notice that we can choose $\lambda$ with $c_3=0$.) So we obtain $q^3-1$ irreducible characters of $\bar{U}$ of degree $q^3$, parameterized by $(Y_4^\times)\cong (\F_{q^3}^\times)$ and denoted by $\chi_{4,q^3}^{b}\in\FF^{even}_{4}$.

%\medskip
%\noindent
\subsubsection{\em Proof of the parametrization $(Y_4^\times,Y_3^*,Y_2^*,Y_1^*)\cong (\F_{q^3}^\times,\F_q^\times,\F_2,\F_2)$ of $\chi_{4,q^3/2}^{b,a,c_1,c_2}$}

Fix $c_4\in\F_{q^3}^\times$. The parameter $Y_3^*\cong\F_q^\times$ corresponds to $q-1$ orbits of $\Irr(Y_3)$ satisfying $c_3\in I_3$ under the action of $Y_2$. %Notice that the orbit of $c_3=0$ gives $\chi_{4,q^3}^b\in\FF_4^{even}$.
It suffices to show the parametrization of $(Y_2^*,Y_1^*)\cong (\F_2,\F_2)$. Fix $c_3\in I_3$, let $\mu\in \Irr(Y_4Y_3)$ such that $\mu|_{Y_i}=\varphi_{c_i}$, $i=3,4$. By the above proof, $\mu$ is extendable to $K_1:=I_{\bar{U}}(\lambda)=HSt$  where $H=Y_4Y_3Y_1$ and $St%=\Stab_{\bar{U}}(\lambda)
=\{1,y_2(r_0t_0)\}$. Let $\eta_1$, $\eta_2$ be two extensions of $\eta$ to $K_1$. Moreover, $\mu$  extends to $K:=Y_4Y_3St_2$ where $St_2=\Stab_{Y_2}(\lambda|_{Y_3})=\{1,y_2(rt_0):r\in\F_q\}$. Let $\theta$ be an extension of $\mu$ to $K$ and $S_1:=\Stab_{Y_1}(\theta)$.
An easy way to read this is described in the following diagram.
\[
\begin{array}{rlcrl}
     Y_4Y_3:&           & \mu   &      &\\
            & \swarrow  &               &\searrow& \\
H=Y_4Y_3Y_1:&\lambda   &        & \theta  & :Y_4Y_3St_2= K\\
            &\downarrow  &      &\downarrow&\\
K_1=HSt:    &\eta_i  &          & \gamma     & :KS_1=K_2\\
            & \searrow  &       &\swarrow &\\
 \bar{U}:   &  &\eta_i^{\bar{U}},\gamma^{\bar{U}}&\in&\Irr(\bar{U})
\end{array}
\]

We shall show the follows.
\begin{itemize}
\item[(i)] $|S_1|{=}2$ and $\theta$ extends to $K_2{:=}I_{\bar{U}}(\theta){=}KS_1$. Call $\gamma$ an extension of $\theta$ to $K_2$.
\item[(ii)] $(\eta_i^{\bar{U}},\gamma^{\bar{U}})=1$ iff $\eta_i|_{St}=\gamma|_{St}$ and $\eta_i|_{S_1}=\gamma|_{S_1}$ for $i=1,2$.
\item[(iii)] $(\eta_1^{\bar{U}},\eta_2^{\bar{U}})=1$ iff $\eta_1|_{St}=\eta|_{St}$ and $\eta|_{S_1}=\eta|_{S_1}$.
\end{itemize}

\begin{proof}
(i) Notice that $\mu$ extends to $K$ because $[K,K]\leq\ker(\mu)$. Let $T_2$ be a transversal of $St_2$ in $Y_2$. It is clear that $T_2Y_1$ is a transversal of $K$ in $\bar{U}$. Since $Y_4Y_3Y_1$ is abelian, it is enough to find $\Stab_{Y_1}(\theta|_{St_2})$. For $t,r\in\F_q$, we have
\[\begin{array}{ll}
{}^{y_1(t)}\theta(y_2(rt_0))&=\theta(y_2(rt_0)[y_2(rt_0),y_1(t)])\\
&=\theta(y_2(rt_0)y_3(trt_0)y_4(trt_0\ol{rt_0}))\\
&=\theta(y_2(rt_0))\varphi(c_3trt_0+c_4tr^{q+1}t_0^{q+1})\\
&=\theta(y_2(rt_0))\varphi_{c_4}(trt_0(c_3c_4^{-1}+rt_0^{q})).
\end{array}\]
To obtain $\Stab_{Y_1}(\theta)\neq\{1\}$ we find $t\in\F_q^\times$ such that %$\varphi_{c_4}(t(c_3c_4^{-1}rt_0+r^2t_0^{q+1}))=1$ for all $r\in\F_q$, i.e.
$trt_0(c_3c_4^{-1}+rt_0^{q})\in\ker(\varphi_{c_4})$ for all $r\in\F_q$. By the uniqueness of $(c_3,r_0,y)\in I_3\times\F_q^\times\times\B_1$ with $c_3=c_4t_0^q(r_0^{-1}y+r_0)$, it requires that $trt_0^{q+1}(r_0^{-1}y+r_0+r)\in\ker(\varphi_{c_4})$ for all $r\in\F_q$. By Lemma \ref{lem:odd-even_3D4} (i) and Lemma \ref{lem:char2-3D4} (ii) with $trr_0^{-1}\in\F_q$ and $t_0^{q+1}y\in t_0^{q+1}\B_1=\B_{t_0}\subset \ker(\varphi_{c_4})$, it suffices to find $t\in\F_q^\times$ such that $tt_0^{q+1}r(r_0+r)\in\ker(\varphi_{c_4})$ for all $r\in\F_q$.

If $tt_0^{q+1}r(r_0+r)\in\B_{t_0}\subset\ker(\varphi_{c_4})$, then by Lemma \ref{lem:odd-even_3D4} (i) with $t,r,r_0\in\F_q^\times$ we have $t_0^{q+1}\in\B_{t_0}.$ By Lemma \ref{lem:char2-3D4} (ii) with $\B_{t_0}=t_0^{q+1}\B_1$, it implies that $1\in\B_1$ which contradicts Lemma \ref{lem:char2-3D4} (i). Thus, $tt_0^{q+1}r(r_0+r)\not\in\B_{t_0}$ for all $r\in\F_q^\times$.

Let $A_t:=\{tr(r_0+r):r\in\F_q\}$ for $t\in\F_q^\times$. Notice that $A_t$ is an $\F_2$-hyperplane of $\F_q$ due to the $\F_2$-homomorphism $h_{r_0}:\F_q\to\F_q$, $r\mapsto r_0r+r^2$. If $t_0^{q+1}A_t\subset \ker(\varphi_{c_4})$, using the above argument we have $\ker(\varphi_{c_4})=\B_{t_0}\oplus t_0^{q+1}A_t$, an $\F_2$-hyperplane of $\F_{q^3}$. By Lemma \ref{lem:char2-3D4} (ii) we have $\ker(\varphi_{c_4})=t_0^{q+1}(\B_1\oplus A_t)$. Notice that $\F_q^\times$ acts transitively on the $\F_2$-hyperplanes of $\F_q$ and the number of $\F_2$-hyperplanes of $\F_q$ is $q-1$. So there exists uniquely $t_1\in\F_q^\times$ such that $\ker(\varphi_{c_4})=t_0^{q+1}(\B_1\oplus A_{t_1})$, which shows that $S_1=\Stab_{Y_1}(\theta)=\{1,y_1(t_1)\}$ as claimed. Here, the extension of $\theta$ to $I_{\bar{U}}(\theta)=KS_1$ is clear by $[KS_1,KS_1]\leq\ker(\theta)$.

\smallskip
(ii) Let $\gamma$ be an extension of $\theta$ to $K_2=KS_1$ and $\eta$ an extension of $\lambda$ to $K_1=HSt$. We shall show that  $(\eta^{\bar{U}},\gamma^{\bar{U}})=1$ iff $\eta|_{St}=\gamma|_{St}$ and $\eta|_{S_1}=\gamma|_{S_1}$. Notice that both $\eta^{\bar{U}},\gamma^{\bar{U}}\in\Irr(\bar{U})$ by Clifford theory.

Since $K_1\lhd \bar{U}=K_1K_2$, by Mackey formula for the double coset $K_1\backslash \bar{U}/K_2$ and Frobenius reciprocity  we have
\[\begin{array}{ll}
(\eta^{\bar{U}},\gamma^{\bar{U}})&=(\eta^{\bar{U}}|_{K_2},\gamma)\\
&=({\eta|_{K_1\cap K_2}}^{K_2},\gamma)\\
&=(\eta|_{K_1\cap K_2},\gamma|_{K_1\cap K_2}).
\end{array}\]
Since $K_1\cap K_2=Y_4Y_3StS_1$ and both $\eta,\gamma$ are linear, the claim holds.

\smallskip
(iii) Choosing $\gamma\in\Irr(K_2)$ such that $\gamma|_{K_1\cap K_2}=\eta_1|_{K_1\cap K_2}$, we have $\eta_1^{\bar{U}}=\gamma^{\bar{U}}$ by part (ii). Again by part (ii) we have $(\eta_2^{\bar{U}},\gamma^{\bar{U}})=1$ iff $\eta_2|_{St}=\gamma|_{St}$ and $\eta_2|_{S_1}=\gamma|_{S_1}$, which completes the proof.
\end{proof}

\subsection{Family $\FF_{3}$ where ${Y_4}{Y_5}{Y_6}\subset \ker(\chi)$ and ${Y_3}\not\subset \ker(\chi)$}
\label{subsec:F3}
Set  $\bar U:=U/{Y_6}{Y_5}{Y_4}$. Recall that the unique commutator relation in $\bar{U}$ is $[{y_1}(t),{y_2}(u)]={y_3}(tu)$. Let $\lambda$ be a linear character of the abelian normal subgroup $H:={Y_3}{Y_2}\lhd \bar U$ such that $\lambda|_{{Y_3}}=\varphi_{c_3}$ for some $c_3\in\F_{q^3}^\times$. Clearly ${Y_1}$ is a transversal of $H$ in $\bar{U}$.

For each ${y_1}(t)\in {Y_1}$ and all ${y_2}(u)\in {Y_2}$, we have
\[\begin{array}{ll}
{}^{{y_1}(t)}\lambda({y_2}(u))%&=\lambda({y_2}(u)^{{y_1}(t)})\\
&=\lambda({y_2}(u))\lambda([{y_2}(u),{y_1}(t)])\\
&=\lambda({y_2}(u))\lambda({y_3}(tu))\\
&=\lambda({y_2}(u))\varphi_{c_3}(tu).
\end{array}\]
So ${}^{{y_1}(t)}\lambda=\lambda$ iff $\varphi_{c_3}(tu)=1$ for all $u\in \F_{q^3}$, clearly iff $t=0$. Thus, the inertia group
$I_{\bar{U}}(\lambda)=H$. By Clifford theory, $\lambda^{\bar{U}}\in \Irr(\bar{U})$ of degree $q$.

So $\FF_{3}$ contains $(q^3-1)q^2$ irreducible characters of degree $q$, parameterized by $(Y_3^\times,Y_2^*)\cong (\F_{q^3}^\times,\F_{q^3}/\F_q)$ and denoted by $\chi_{3,q}^{b_1,b_2}$, where $Y_2^*$ is the representative set of $q^2$ orbits of $\Irr(Y_2)$ under the action of $Y_1$ for a fixed $c_3\in\F_{q^3}^\times$.

\subsection{Family $\FF_{lin}$ where ${Y_3}{Y_4}{Y_5}{Y_6}\subset \ker(\chi)$}
\label{subsec:Flin}
Set $\bar{U}:=U/{Y_6}{Y_5}{Y_4}{Y_3}$. Since $\bar{U}$ is abelian, this family is the set of all linear characters of $\bar{U}$. Here we obtain $q^4$ linear characters parameterized by $(Y_2,Y_1)\cong (\F_{q^3},\F_q)$ and denoted by $\chi_{lin}^{b,a}$.

\end{document}